\renewcommand{\PrintDOI}[1]{\doi{#1}}
\newtheorem{THM}{Theorem}
\newtheorem{LEM}[THM]{Lemma}
\newtheorem{PROB}[THM]{Problem}
\theoremstyle{definition}
\newtheorem{EX}[THM]{Example}
\newcommand{\N}{\mathbb{N}}
\newcommand{\R}{\mathbb{R}}
\newcommand{\sub}{\subseteq}
\def\sm{\smallsetminus}
\newcommand{\braces}[1]{\left(#1\right)}
\newcommand{\menge}[1]{\left\{#1\right\}}
\newcommand{\abs}[1]{\left |#1\right |}
\newcommand{\tn}[1]{\textnormal{#1}}
\begin{document}

\begin{frontmatter}[classification=text]


\author[ce]{Christian Elbracht}
\author[jk]{Jay Lilian Kneip}
\author[mt]{Maximilian Teegen}

\begin{abstract}
	We show that, given a $ k $-tangle $ \tau $ in a graph $ G $, there always exists a weight function $ w\colon V(G)\to\N $ such that a separation $ (A,B) $ of $ G $ of order $ <k $ lies in $ \tau $ if and only if $ w(A)<w(B) $, where $ w(U)\coloneqq\sum_{u\in U}w(u) $ for $ U\sub V(G) $. We show that the same result holds also for tangles of hypergraphs as well as for edge-tangles of graphs, but not for edge-tangles of hypergraphs.
\end{abstract}
\end{frontmatter}


\section{Introduction}

Tangles in graphs have played a central role in graph minor theory ever since their introduction by Robertson and Seymour in~\cite{GMX}. Formally, a tangle in a graph $ G $ is an orientation of all low-order separations of $ G $ satisfying certain consistency assumptions. Tangles can be used to locate, and thereby capture the essence of, highly connected substructures in $ G $ in that every such substructure defines a tangle in $ G $ by orienting each low-order separation of $ G $ towards the side containing most or all of that substructure. In view of this, if some tangle in $ G $ contains the separation $ (A,B) $, we  think of $ A $ and $ B $ as the `small' and the `big' side of $ (A,B) $ in that tangle, respectively. Our main result will make this intuition concrete.

As a concrete example, if $ G $ contains an $ n\times n $-grid for large $ n $, then the vertex set of that grid defines a tangle $ \tau $ in $ G $ as follows.
Take note that no separation of low order can divide the grid into two parts of roughly equal size:
If the grid is large enough then at least 90\% of its vertices, say, will lie on the same side of such a separation.
Orienting towards that side all the separations of order $<k$ for some fixed $k$ much smaller than $n$ then gives a tangle $\tau$.
In this way, the vertex set of the $ n\times n $-grid `defines $ \tau $ by majority vote'.

In~\cite{ProfilesNew} Diestel raised the question whether all tangles in graphs arise in the above fashion, that is, whether all graph tangles are decided by majority vote by some subset of the vertices:
\begin{PROB}\label{prob:decider}
Given a $ k $-tangle~$ \tau $ in a graph $ G $, is there always a set $ X $ of vertices such that a separation $ (A,B) $ of order $ <k $ lies in $ \tau $ if and only if $ \abs{A\cap X}<\abs{B\cap X} $?
\end{PROB}
A partial answer to this was given in~\cite{ChristiansMasterarbeit}, where Elbracht showed that such a set $ X $ always exists if $ G $ is $ (k-1) $-connected and has at least $ 4(k-1) $ vertices. However Elbracht's approach relies heavily on the $ (k-1) $-connectedness of the graph and offers no line of attack for the general problem. Finding an answer for arbitrary graphs appears to be hard.

If a tangle in $ G $ is decided by some vertex set $ X $ by majority vote, this set $ X $ can be used as an oracle for that tangle, allowing one to store complete information about the complex structure of a tangle using a set of size at most $ \abs{V} $. On the other hand, if there were tangles without such a decider set, this would mean that tangles are a fundamentally more general concept than concrete highly cohesive subsets, not just an indirect way of capturing them.

In this paper, we consider a fractional version of Diestel's question and answer it affirmatively, making precise the notion that $ B $ is the `big' side of a separation $ (A,B)\in\tau $: given a $ k $-tangle $ \tau $ in $ G $, rather than finding a vertex set $ X $ which decides $ \tau $ by majority vote, we find a weight function $ w\colon V(G)\to\N $ on the vertices such that for all separations $ (A,B) $ of order $ <k $ we have $ (A,B)\in \tau $ if and only if the vertices in $ B $ have higher total weight than those in~$ A $.

Thus we show that every graph tangle is decided by some {\em weighted} set of vertices. This weight function, or weighted set of vertices, can then serve as an oracle for that tangle in the same way that a vertex set deciding the tangle by majority vote would. For any tangle, the existence of such a weight function with values in~$ \{0,1\} $ is equivalent to the existence of a vertex set $ X $ deciding that tangle by majority vote.

In~\cref{sec:weighted-deciders} of this paper we will formally define separations and tangles, and formulate and prove our main theorem asserting that tangles of graphs (and of hypergraphs) always admit such a weight function. Following that we show in~\cref{sec:edgetangles} that the same arguments are also applicable to edge-tangles of graphs, a relative of the tangles usually considered, and prove our main result also for this type of~tangle.

For settings beyond graphs it is known that the analogue of Diestel's question may be false. For instance, Geelen~\cite{GeelenExample} pointed out that there are matroid tangles which cannot be decided by majority vote, not even when considering a fractional version of the problem. For edge-tangles as analysed in~\cref{sec:edgetangles} the fractional version of~\cref{prob:decider} is true for graphs but may fail for hypergraphs. We demonstrate the latter with a counterexample, which, though discovered independently, is conceptually similar to Geelen's example in the matroid setting.

\section{Weighted deciders}\label{sec:weighted-deciders}

Formally, a {\em separation} of a graph $ G=(V,E) $ is a pair $ (A,B) $ with $ A\cup B=V $ such that~$ G $ contains no edge between $ A\sm B $ and $ B\sm A $, and the {\em order} of a separation $ (A,B) $ is the size $ \abs{A\cap B} $ of its separator $ A\cap B $. Furthermore, for an integer $ k $, a \emph{$ k $-tangle} in $ G $ is a set consisting of exactly one of $ (A,B) $ and $ (B,A) $ for every separation $ (A,B) $ of $ G $ of order $ <k $, with the additional property that no three `small' sides of separations in $ \tau $ cover $ G $, that is, that there are no $ (A_1,B_1),(A_2,B_2),(A_3,B_3)\in\tau $ for which $ G=G[A_1]\cup G[A_2]\cup G[A_3] $.

Our main result is the following:

\begin{THM}\label{thm:tangledecider}
	Let $ G=(V,E) $ be a finite graph and $ \tau $ a $ k $-tangle in $ G $. Then there exists a function $ w\colon V\to\N $ such that a separation $ (A,B) $ of $ G $ of order $ <k $ lies in $ \tau $ if and only if~$ w(A)<w(B) $, where $ w(U)\coloneqq\sum_{u\in U}w(u) $ for $ U\sub V $.
\end{THM}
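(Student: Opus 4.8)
The plan is to find the weight function $w$ by setting up a linear feasibility problem and solving it via a duality or separation argument. For each separation $(A,B)$ of order $<k$ with $(A,B)\in\tau$, we want the strict inequality $w(B)-w(A)>0$, i.e.\ $\sum_{v\in B\sm A}w(v)-\sum_{v\in A\sm B}w(v)>0$ (the separator contributes to both sides and cancels). Each such oriented separation thus imposes one linear inequality on the unknown vector $w\in\R^V$. The task is to show this (finite) system of strict linear inequalities, together with $w\ge 0$, has a solution; since the data are rational we can then clear denominators to land in $\N$. So the whole theorem reduces to a consistency statement about finitely many linear inequalities.

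To prove feasibility I would invoke a theorem of the alternative (Gordan's theorem, or a suitable form of Farkas' lemma / LP duality). Feasibility of the strict system fails precisely when some nonnegative, not-all-zero combination of the inequality normal vectors sums to the zero functional. Concretely, the system $\{\,\mathbf{1}_{B\sm A}-\mathbf{1}_{A\sm B}>0 : (A,B)\in\tau\,\}$ is infeasible iff there exist nonnegative coefficients $\lambda_{(A,B)}$, not all zero, with $\sum_{(A,B)\in\tau}\lambda_{(A,B)}\bigl(\mathbf{1}_{B\sm A}-\mathbf{1}_{A\sm B}\bigr)\le 0$ coordinatewise. My goal is to derive a contradiction from such a combination using the tangle axioms. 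The key leverage must be the consistency axiom: no three small sides $G[A_1]\cup G[A_2]\cup G[A_3]$ cover $G$. This strongly suggests the magic number in the proof is $3$, and that I should aim to show any putative infeasibility certificate lets me extract three separations in $\tau$ whose small sides cover all of $V$.

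The main obstacle, and the crux of the argument, is bridging from a fractional dual certificate to the combinatorial tangle axiom. The dual combination gives a \emph{weighted} statement about small sides, while the tangle axiom is about three \emph{specific} small sides covering $V$. I would try to argue as follows: interpret the nonnegative coefficients $\lambda$ as a fractional covering of $V$ by the small sides $A\sm B$ (the separators, living in $A\cap B$, need separate careful handling since a vertex in a separator lies in neither $A\sm B$ nor $B\sm A$). The inequality at each vertex $v$ says the total $\lambda$-weight of separations whose small side strictly contains $v$ is at least the weight of those whose big side strictly contains it. I expect to round or sparsify this fractional cover—perhaps via an averaging or LP-rounding argument, or by repeatedly replacing two separations by their corner $(A_1\cap A_2,\,B_1\cup B_2)$ and $(A_1\cup A_2,\,B_1\cap B_2)$ using submodularity of the order function—to reduce to an integral cover by at most three sides, contradicting the tangle axiom.

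Getting the submodular uncrossing to respect both the tangle orientation (so that the corners produced still lie in $\tau$, which requires the standard fact that a $k$-tangle orients the distributive lattice of separations consistently with meets and joins) and the covering inequality simultaneously is where I anticipate the real difficulty; handling the separator vertices correctly so that ``small sides cover $G$'' in the edge sense (not merely the vertex sense) is a subtle point I would need to pin down, since the axiom requires $G=G[A_1]\cup G[A_2]\cup G[A_3]$ as subgraphs, edges included.
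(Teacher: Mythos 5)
Your reduction to a finite system of strict linear inequalities, and the dual characterization of its infeasibility (there exist $\lambda_{(A,B)}\ge 0$, not all zero, with $\sum_{(A,B)\in\tau}\lambda_{(A,B)}\bigl(\mathbf{1}_{B\sm A}-\mathbf{1}_{A\sm B}\bigr)\le 0$ coordinatewise), are both correct. But everything after that --- refuting such a certificate from the tangle axioms --- is the entire content of the theorem, and you leave it as a hope rather than an argument. Worse, the sketch you give rests on a false premise: a $k$-tangle does \emph{not} ``orient the distributive lattice of separations consistently with meets and joins.'' The correct statement (the profile property) is conditional: if $(A_1,B_1),(A_2,B_2)\in\tau$ and the corner $(A_1\cup A_2,\,B_1\cap B_2)$ has order $<k$, then it lies in $\tau$; but that corner may have order $\ge k$, in which case the tangle does not orient it at all (submodularity only guarantees that \emph{one} of the two opposite corners has low order). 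This failure is not a technicality you can engineer around: for \emph{distinct maximal} elements $(A,B),(C,D)$ of $\tau$ the join $(A\cup C,\,B\cap D)$ \emph{always} has order $\ge k$ --- it is strictly above a maximal element, so it is not in $\tau$, and by the cover axiom its inverse is not in $\tau$ either, which forces its order to be $\ge k$. Since a dual certificate may well be supported on maximal elements, your uncrossing step is unavailable exactly where the argument has to do its work. On top of this, even granting uncrossing, your certificate (normalized to $\sum\lambda=1$) only says each vertex is fractionally covered to extent $\ge 1/2$ by small sides; no generic rounding turns a fractional cover of value $2$ into an integral cover by \emph{three} sets, and vertex coverage does not yield the edge coverage $G=G[A_1]\cup G[A_2]\cup G[A_3]$ that the axiom requires --- a point you flag yourself but do not resolve.

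For contrast, the paper never refutes a dual certificate; it proves primal feasibility directly, using two reductions absent from your proposal. First, it suffices to satisfy the \emph{maximal} elements $(A_1,B_1),\dots,(A_n,B_n)$ of $\tau$, since $w(C)\le w(A)<w(B)\le w(D)$ whenever $(C,D)\le(A,B)$. Second, one looks for $w$ only in the span of separator indicators, $w=\sum_i x_i\mathbf{1}_{A_i\cap B_i}$ with $x_i\ge 0$; then $w(B_i)-w(A_i)=(Mx)_i$ for the matrix $m_{ij}=\abs{B_i\cap(A_j\cap B_j)}-\abs{A_i\cap(A_j\cap B_j)}$. The tangle axiom enters exactly once, via the cross-counting inequality $m_{ij}+m_{ji}>0$ for $i\ne j$ (proved by the ``join has order $\ge k$'' observation above), after which Tucker's theorem on skew-symmetric matrices, applied to the antisymmetric part of $M$, produces $x\ge 0$ with $Mx>0$ (up to an $\varepsilon$-perturbation in one degenerate case). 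Your proposal correctly identifies that the problem is a linear feasibility question and correctly locates the difficulty, but it does not overcome it; the two structural reductions above are precisely what is missing.
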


We shall prove Theorem~\ref{thm:tangledecider} in the remainder of this section. Our general strategy will be as follows: we define a partial order on the separations of~$ G $ and consider the set of those separations of the $ k $-tangle~$ \tau $ that are maximal in this partial order. For these separations we will be able to show that, on average, their separators divide each other so that they lie more on the `big' side of each other, where `big' is the big side according to~$ \tau $. This will enable us to use a result from linear programming to find a weight function assigning weights to the vertices of these separators such that this weight function decides all these maximal separations of~$ \tau $ correctly. The nature of the partial order will then ensure that this weight function in fact decides all separations in~$ \tau $ correctly.

For a graph $ G $ there is a partial order on the separations of $ G $ given by letting $ {(A,B)\le(C,D)} $ if and only if $ A\sub C $ and $ B\supseteq D $. One of the main ingredients for the proof of Theorem~\ref{thm:tangledecider} is the following observation about those separations in a tangle $ \tau $ that are maximal in $ \tau $ with respect to this partial order.
It says, roughly, that they divide each other's separators so that, on average, those separators lie more on the big side of the separation than on the small side, according to the tangle.

\begin{LEM}\label{lem:cross-counting}
	For every $ k $-tangle $ \tau $ in a graph $ G $ and distinct maximal elements $ (A,B),(C,D) $ of $ \tau $ we have
	$ \abs{B\cap(C\cap D)}+\abs{D\cap(A\cap B)}>\abs{A\cap(C\cap D)}+\abs{C\cap(A\cap B)}$.
\end{LEM}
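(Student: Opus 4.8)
The plan is to reduce the asserted inequality to a comparison of the orders of two corner separations, and then to pin down those orders using maximality, the tangle axiom, and the modularity of the separation order.

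First I would introduce the two corner separations $(A\cup C,\,B\cap D)$ and $(A\cap C,\,B\cup D)$ of $G$. By inclusion--exclusion their separators have sizes $\abs{B\cap(C\cap D)}+\abs{D\cap(A\cap B)}-m$ and $\abs{A\cap(C\cap D)}+\abs{C\cap(A\cap B)}-m$, where $m\coloneqq\abs{A\cap B\cap C\cap D}$. Since the term $m$ cancels, the asserted inequality is \emph{equivalent} to the statement that $(A\cup C,B\cap D)$ has strictly larger order than $(A\cap C,B\cup D)$. This is the form I would actually prove.

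Next I would show that the ``upper'' corner $(A\cup C,B\cap D)$ has order at least $k$. As $(A,B)$ and $(C,D)$ are distinct maximal elements of $\tau$, this corner lies strictly above both of them in the partial order (equality with either would force the other below it, contradicting maximality), so maximality rules out $(A\cup C,B\cap D)\in\tau$. To see that its inverse $(B\cap D,A\cup C)$ also fails to lie in $\tau$, I would check that the three sets $A$, $C$ and $B\cap D$ cover $G$: every vertex outside $A\cup C$ lies in $B\cap D$ since $A\cup B=C\cup D=V$, and, crucially, every \emph{edge} lies inside one of the three sets. Indeed, an edge meeting neither $G[A]$ nor $G[C]$ would have an endpoint in $B\sm A$ and an endpoint in $D\sm C$; the separation condition for $(A,B)$ then forces both endpoints into $B$ and that for $(C,D)$ forces both into $D$, so the edge lies in $G[B\cap D]$. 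Hence $(B\cap D,A\cup C)\in\tau$ would exhibit three separations of $\tau$ whose small sides cover $G$, contradicting the tangle axiom. Since $\tau$ orients neither $(A\cup C,B\cap D)$ nor its inverse, this separation must have order at least $k$.

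Finally I would invoke the modularity of the vertex-separator order, namely the identity $\abs{A\cap B}+\abs{C\cap D}=\abs{(A\cup C)\cap(B\cap D)}+\abs{(A\cap C)\cap(B\cup D)}$, which is a routine check on the nine regions cut out by the two separations. As both $(A,B)$ and $(C,D)$ have order at most $k-1$, the ``lower'' corner $(A\cap C,B\cup D)$ then has order at most $2(k-1)-k=k-2$, while the ``upper'' corner has order at least $k$. Comparing these two orders yields the strict inequality, and translating back through the first step completes the proof. The one genuinely delicate point is the edge-covering claim of the middle step, which is exactly where both separation conditions are needed; the reformulation and the modularity identity are only bookkeeping.
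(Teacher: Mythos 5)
Your proposal is correct and follows essentially the same route as the paper's own proof: orient the corner separation $(A\cup C,\,B\cap D)$, rule out both orientations via maximality and the tangle axiom (using that $A$, $C$, $B\cap D$ cover $G$) to get order at least $k$, then compare with the other corner via the (sub)modularity of the order function and translate back by adding $\abs{A\cap B\cap C\cap D}$. You merely spell out details the paper leaves implicit — the strictness of the corner's position above both maximal elements and the edge-covering verification — and you correctly state the inverse as $(B\cap D,\,A\cup C)$, where the paper's text has a small typo.
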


\begin{proof}
	Let $ \tau $ be a $ k $-tangle in $ G=(V,E) $ and $ (A,B) $ and $ (C,D) $ distinct maximal elements of~$ \tau $. Observe that $ (A\cup C\,,\,B\cap D) $ is a separation of $ G $ as well. In fact this separation is the supremum of $ (A,B) $ and $ (C,D) $ in the partial order. Therefore $ \tau $ cannot contain $ (A\cup C\,,\,B\cap D) $ by the assumed maximality of $ (A,B) $ and $ (C,D) $ in~$ \tau $. On the other hand $ \tau $ cannot contain $ (B\cap D\,,\,A\cup C) $ either since $ A $, $ C $, and $ B\cap D $ together cover~$ G $. Consequently, since $ \tau $ is a $ k $-tangle, we must have~$ \abs{(A\cup C)\cap(B\cap D)}\ge k $.
	
	Recall that $ \abs{A\cap B}<k $ and $ \abs{C\cap D}<k $ since $ \tau $ is a $ k $-tangle. Observe additionally that the order of separations is modular, that is,
	\[ \abs{A\cap B}+\abs{C\cap D}=\abs{(A\cup C)\cap(B\cap D)}+\abs{(A\cap C)\cap(B\cup D)}\,. \]	
        With the above inequalities this implies that~$ \abs{(A\cap C)\cap(B\cup D)}<k $, and hence in particular that \[ \abs{(A\cap C)\cap(B\cup D)}<\abs{(A\cup C)\cap(B\cap D)}. \] Adding~$ \abs{A\cap B\cap C\cap D} $ to both sides proves the claim. 
\end{proof}

Additionally we shall use a result from linear programming: Tucker's~Theorem, a close relative of the Farkas~Lemma.
For a vector $ x\in\R^n $ we use the usual shorthand notation $ x\ge 0 $ to indicate that all entries of $ x $ are non-negative, and similarly write $ x > 0 $ if all entries of $ x $ are strictly greater than zero.

\begin{LEM}[Tucker's Theorem~\cite{TuckerDuality}]\label{lem:tucker}
	Let $ K\in\R^{n\times n} $ be a skew-symmetric matrix, i.e. $ {K^T=-K} $. Then there exists a vector $ x\in\R^n $ such that
	\[ Kx\ge0 \quad \text{and}\quad x\ge 0 \quad \text{and}\quad x+Kx>0. \]
\end{LEM}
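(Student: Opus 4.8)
The plan is to prove the lemma coordinate by coordinate and then to add the resulting vectors. Write $C=\menge{x\in\R^n : x\ge 0,\ Kx\ge 0}$ for the feasible cone; note that $C$ is closed under addition and under multiplication by nonnegative scalars, and that every $x\in C$ already satisfies $x+Kx\ge 0$ since both $x$ and $Kx$ are nonnegative. The target condition $x+Kx>0$ thus only asks for strict positivity in every coordinate. The key reduction is the following per-coordinate claim: for each index $i$ there is a vector $x^{(i)}\in C$ with $(x^{(i)}+Kx^{(i)})_i>0$. Granting this, I would set $x\coloneqq\sum_{i=1}^n x^{(i)}$; then $x\in C$, so $x\ge 0$ and $Kx\ge 0$, and for every coordinate $j$ the summand $(x^{(j)}+Kx^{(j)})_j$ is strictly positive while all remaining summands $(x^{(i)}+Kx^{(i)})_j$ are $\ge 0$, giving $(x+Kx)_j>0$. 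Hence $x+Kx>0$, as required.

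To establish the per-coordinate claim for a fixed $i$, I would show that the system $x\ge 0$, $Kx\ge 0$, $(x+Kx)_i\ge 1$ is feasible; feasibility of this system is equivalent to the claim, because $C$ is a cone, so any $x\in C$ with $(x+Kx)_i>0$ can be rescaled to meet the bound $1$, and conversely any solution of the system witnesses the claim. I would assume the contrary and invoke Farkas' lemma (a theorem of the alternative, treated as standard background): infeasibility of a system $Mx\ge b$ yields a vector $y\ge 0$ with $y^{T}M=0$ and $y^{T}b>0$. Writing the dual multipliers as $p\ge 0$ for the block $x\ge 0$, $q\ge 0$ for the block $Kx\ge 0$, and a scalar $\lambda\ge 0$ for the last inequality, the stationarity condition $y^{T}M=0$ transposes to
\[ p - Kq + \lambda(I-K)e_i = 0, \]
where I have used skew-symmetry in the form $K^{T}=-K$, so that $(I+K)^{T}=I-K$, and the objective condition $y^{T}b>0$ forces $\lambda>0$.

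Here the skew-symmetry does the decisive work: the dual certificate is itself a primal feasible point. Rearranging the displayed equation gives $K(q+\lambda e_i)=p+\lambda e_i$. Setting $z\coloneqq q+\lambda e_i$, I obtain $z\ge 0$ with $z_i\ge\lambda>0$, and $Kz=p+\lambda e_i\ge 0$ with $(Kz)_i\ge\lambda>0$; thus $z\in C$ and $(z+Kz)_i\ge 2\lambda>0$, so after rescaling $z$ solves the very system that was assumed infeasible, a contradiction. The main obstacle is precisely isolating this self-dual structure — choosing the system $\menge{x\ge 0,\ Kx\ge 0,\ (x+Kx)_i\ge 1}$ so that the Farkas alternative, once transposed via $K^{T}=-K$, returns a vector satisfying the conditions it was meant to certify impossible; after that choice the computation is routine. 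The only remaining care is bookkeeping: verifying that the two nonnegativity blocks and the scalar multiplier recombine correctly, and that the final coordinate-wise sum preserves both membership in $C$ and strict positivity in every coordinate.
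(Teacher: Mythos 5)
Your proof is correct, but there is nothing in the paper to compare it against: the paper states this lemma as a known result with a citation to Tucker and uses it as a black box, giving no proof of its own. What you have produced is a correct, self-contained proof of the cited result, and it follows the classical route by which Tucker's theorem on skew-symmetric (self-dual) systems is derived from Farkas' lemma. Both of your key steps check out. First, the per-coordinate reduction is sound: the cone $C=\menge{x\in\R^n : x\ge 0,\ Kx\ge 0}$ is closed under addition, every $x\in C$ satisfies $x+Kx\ge 0$ coordinatewise, and so summing witnesses $x^{(1)},\dots,x^{(n)}$ makes every coordinate of $x+Kx$ strictly positive. Second, the self-duality step is computed correctly: the theorem-of-the-alternative certificate $(p,q,\lambda)\ge 0$ for infeasibility of $\menge{x\ge 0,\ Kx\ge 0,\ e_i^{T}(I+K)x\ge 1}$ transposes, via $K^{T}=-K$, to $p-Kq+\lambda(I-K)e_i=0$, i.e.\ $K(q+\lambda e_i)=p+\lambda e_i$, and since $\lambda=y^{T}b>0$ the vector $z=q+\lambda e_i$ lies in $C$ with $(z+Kz)_i\ge 2\lambda>0$, so after rescaling it solves the supposedly infeasible system — a genuine contradiction. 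The Farkas variant you invoke (infeasibility of $Mx\ge b$ with $x$ free yields $y\ge 0$ with $y^{T}M=0$ and $y^{T}b>0$) is a standard theorem of the alternative and applies here, as your system is a finite list of linear inequalities. Compared with the paper's citation-only treatment, your argument buys self-containedness at the modest cost of assuming Farkas' lemma; it is essentially the textbook derivation of Tucker's result, so you have rediscovered the standard argument rather than found a shortcut around it.
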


We are now ready to prove Theorem~\ref{thm:tangledecider}.

\begin{proof}[Proof of Theorem~\ref{thm:tangledecider}]
	Let a finite graph $ G=(V,E) $ and a $ k $-tangle $ \tau $ in $ G $ be given. Since $ G $ is finite it suffices to find a weight function $ {w\colon V\!\to\R_{\ge 0}} $ such that a separation $ (A,B) $ of order $ <k $ lies in $ \tau $ precisely if $ {w(A)<w(B)} $; by the density of the rationals in the reals, this $ w $ can then be turned into such a weight function with values in~$ \N $. 
	
	For this it is enough to find a function $ w\colon V\!\to\R_{\ge 0} $ such that $ w(A)<w(B) $ for all maximal elements $ (A,B) $ of $ \tau $: for if $ w(A)<w(B) $ and $ (C,D)\le(A,B) $ then
	\[ w(C)\le w(A)<w(B)\le w(D). \]
	So let us show that such a weight function $ w $ exists.
	
	To this end let $ (A_1,B_1),\dots,(A_n,B_n) $ be the maximal elements of $ \tau $ and set
	\[ m_{ij}\coloneqq\abs{B_i\cap(A_j\cap B_j)}-\abs{A_i\cap(A_j\cap B_j)} \]
	for $ i,j\le n $. Let $ M $ be the matrix $ \{m_{ij}\}_{i,j\le n} $. Observe that, by Lemma~\ref{lem:cross-counting}, we have $ m_{ij}+m_{ji}>0 $ for all $ i\ne j $ and hence the matrix $ M+M^T $ has positive entries everywhere but on its diagonal (where it has zeros). We further define
	\[ K'\coloneqq\frac{M+\mathrlap{M^T}\phantom{M}}{2}\;\qquad\tn{ and }\qquad K\coloneqq M-K'. \]
	Then $ K $ is skew-symmetric, that is, $ K^T=-K $. Let $ x=(x_1,\dots,x_n)^T $ be the vector obtained by applying Lemma~\ref{lem:tucker} to~$ K $. We define a weight function $ w\colon V\!\to\R $ by
	\[ w(v)\coloneqq\sum_{i\colon v\in A_i\cap B_i}x_i \,. \]
	Note that $ w $ has its image in $ \R_{\ge 0} $ and observe further that, for $ Y\sub V $, we have
	\[ w(Y)=\sum_{y\in Y}w(y)=\sum_{i=1}^{n}x_i\cdot\abs{Y\cap(A_i\cap B_i)}. \]
	With this, for $ i\le n $, we have
	\begin{align*}
	w(B_i)-w(A_i)&=\sum_{j=1}^nx_j\cdot\braces{\abs{B_i\cap(A_j\cap B_j)}-\abs{A_i\cap(A_j\cap B_j)}}\\
	&=\sum_{j=1}^{n}x_j\cdot m_{ij}\\
	&=(Mx)_i\,,
	\end{align*}
	where $ (Mx)_i $ denotes the $ i $-th coordinate of $ Mx $. Thus $ w $ is the desired weight function if we can show that $ Mx>0 $, that is, if all entries of $ Mx $ are positive.
	
	From $ x+Kx>0 $ we know that at least one entry of $ x $ is positive. Let us first consider the case that $ x $ has two or more positive entries. Then $ K'x>0 $ since~$ K' $ has positive values everywhere but on the diagonal, and hence
	\[ Mx=(K+K')\,x>0 \]
	since $ Kx\ge 0 $. Therefore, in this case, $ w $ is the desired weight function.
	
	Consider now the case that exactly one entry of $ x $, say $ x_i $, is positive, and that $ x $ is zero in all other coordinates. Then for $ j\ne i $ we have $ (Mx)_j\ge(K'x)_j>0 $ and thus $ w(B_j)-w(A_j)=(Mx)_j>0 $. However $ (Mx)_i=0 $ and thus $ w(A_i)=w(B_i) $, so $ w $ is not yet as claimed. To finish the proof it remains to modify $ w $ such that $ w(A_i)<w(B_i) $ while ensuring that we still have $ w(A_j)<w(B_j) $ for $ j\ne i $. This can be achieved by picking a sufficiently small $ \varepsilon>0 $ such that $ {w(A_j)+\varepsilon<w(B_j)} $ for all $ j\ne i $, picking any $ {v\in B_i\sm A_i }$, and increasing the value of $ w(v) $ by~$ \varepsilon $.	
\end{proof}

We conclude this section with the remark that Theorem~\ref{thm:tangledecider} and its proof extend to tangles in hypergraphs without any changes. Even more generally, the following version of Theorem~\ref{thm:tangledecider}, which is formulated in the language of~\cite{ProfilesNew}, can be established with exactly the same proof as well:

\begin{THM}\label{thm:profiledecider}
    Let $ \vec{U} $ be a universe of set separations of a finite ground-set $ V $ with the order function $ \abs{(A,B)}\coloneqq\abs{A\cap B} $. Then for any regular $ k $-profile $ P $ in $ \vec{U} $ there exists a function $ w\colon V\!\to\N $ such that a separation $ (A,B) $ of order $ {<k} $ lies in $ P $ if and only if~$ w(A)<w(B) $.
\end{THM}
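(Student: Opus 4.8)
The plan is to run the proof of Theorem~\ref{thm:tangledecider} essentially verbatim, checking at each step that the two defining properties of a \emph{regular} profile take over the role played there by the tangle axioms. As before, since $ V $ is finite it suffices to produce a weight function $ w\colon V\to\R_{\ge0} $ with $ w(A)<w(B) $ for all $ (A,B)\in P $ of order $ <k $, which can then be rescaled to take values in $ \N $; and by the same domination argument (using only $ w\ge0 $ and the partial order) it is in fact enough to arrange $ w(A)<w(B) $ for the maximal elements of $ P $.

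First I would establish the analogue of Lemma~\ref{lem:cross-counting} for $ P $. Its proof invokes the tangle axioms in exactly one place: to exclude the separation $ (B\cap D,\,A\cup C) $ from the orientation. For a profile this exclusion is immediate, since $ (B\cap D,\,A\cup C) $ is the inverse of the corner separation $ (A\cup C,\,B\cap D) $ formed from $ (A,B),(C,D)\in P $, and the profile property says precisely that this inverse cannot lie in $ P $. The complementary exclusion of $ (A\cup C,\,B\cap D) $ itself rests only on maximality with respect to the partial order, which is available in any universe of set separations. Submodularity of the order function $ \abs{(A,B)}=\abs{A\cap B} $ — a formal property of set separations — then yields $ \abs{(A\cap C)\cap(B\cup D)}<k $ from $ \abs{(A\cup C)\cap(B\cap D)}\ge k $, and adding $ \abs{A\cap B\cap C\cap D} $ to both sides gives the cross-counting inequality unchanged.

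With this inequality in hand, the entire linear-algebraic core of the argument transfers without change: the matrix $ M=\{m_{ij}\} $, its splitting into the symmetric part $ K' $ (positive off the diagonal) and the skew-symmetric part $ K $, the vector $ x $ supplied by Lemma~\ref{lem:tucker}, the weight function $ w(v)=\sum_{i\colon v\in A_i\cap B_i}x_i $, and the identity $ w(B_i)-w(A_i)=(Mx)_i $ are all purely formal and never refer to the graph structure. The case distinction on the number of positive entries of $ x $ then goes through word for word as well.

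The single remaining point that genuinely needs a hypothesis on $ P $ is the final perturbation, in the case where $ x $ has exactly one positive coordinate $ x_i $: there one selects a vertex $ v\in B_i\sm A_i $ and nudges its weight up by $ \varepsilon $. This step requires $ B_i\sm A_i\ne\emptyset $, and this is precisely where regularity enters. If instead $ B_i\sub A_i $, then $ (B_i,A_i) $ would be a small separation, which a regular profile must contain, contradicting $ (A_i,B_i)\in P $; hence $ B_i\sm A_i\ne\emptyset $ and the perturbation is legitimate. I expect the only real work to be exactly this bookkeeping — pinning down that the profile property and regularity slot into the two spots formerly occupied by the tangle axioms — while everything else is a transcription of the proof of Theorem~\ref{thm:tangledecider}.
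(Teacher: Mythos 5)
Your proposal is correct and is essentially the paper's own argument: the paper, too, establishes Theorem~\ref{thm:profiledecider} by rerunning the proof of Theorem~\ref{thm:tangledecider} verbatim, with the profile property taking over the single role the tangle axioms play in Lemma~\ref{lem:cross-counting}, namely excluding $(B\cap D,\,A\cup C)$ from the orientation, while maximality, submodularity of the order function, and the linear-algebraic core carry over unchanged. You even make explicit a detail the paper leaves implicit — that regularity is precisely what guarantees $B_i\smallsetminus A_i\neq\emptyset$ in the final perturbation step — so nothing is missing.
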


In~\cref{thm:profiledecider} a set separation of some ground-set $ V $ is a pair $ (A,B) $ of subsets of~$ V $ with~$ A\cup B=V $. A set $ \vec{U} $ of such separations is a universe if $ \vec{U} $ contains $ (B,A) $ and~$ (A\cup C\,,\,B\cap D) $ for all $ (A,B) $ and $ (C,D) $ in~$ \vec{U} $. As for graphs, a partial order on the set separations of $ V $ is given by letting $ (A,B)\le(C,D) $ if $ A\cup C $ and~$ B\supseteq D $.

For an integer $ k $, a regular $ k $-profile in $ \vec{U} $ is a set $ P $ consisting of exactly one of $ (A,B) $ and $ (B,A) $ for every $ (A,B) $ in $ \vec{U} $ of order $ \abs{A\cap B} < k $, with the additional property that there are no $ (A,B) $ and $ (C,D) $ in $ P $ for which $ (B,A)\le(C,D) $ or such that~$ P $ contains~$ (B\cap D\,,\,A\cup C) $.

Observe that if $ G=(V,E) $ is a (hyper-)graph then the set $ \vec{U} $ of all separations of $ G $ is such a universe. Moreover every $ k $-tangle $ \tau $ of $ G $ is also a regular $ k $-profile of~$ \vec{U} $. (See~\cite{ProfilesNew} for more on the relation between graph tangles and profiles.) Therefore~\cref{thm:profiledecider} indeed applies to tangles in graphs and hypergraphs as well.

Theorem~\ref{thm:profiledecider} holds with the same proof as Theorem~\ref{thm:tangledecider}, since Lemma~\ref{lem:cross-counting} holds in this setting too: the only difference being that to see that $ (B\cap D\,,\,A\cup C) $ cannot lie in the profile at hand one now has to use the definition of a regular $ k $-profile rather than the fact that $ A $, $ C $, and $ B\cap D $ cover~$ G $.

\section{Edge-tangles}\label{sec:edgetangles}

A related object of study (cf.~\cites{liu_packing_2015,diestel_tangle-tree_2017}) to the (vertex-)tangles discussed above are the edge-tangles of a graph. In this context one considers the \emph{(edge) cuts} of a (multi-)graph $ G = (V,E) $, i.e.\ bipartitions $ (A,B) $ of $ V $. The \emph{order} of a cut $ (A,B) $ is the number of edges in $ G $ that are incident with vertices of both $ A $ and~$ B $.
For an integer~$ k $, a \emph{$ k $-edge-tangle} of~$ G $ is a set~$ \tau $ consisting of exactly one $ (A,B) $ or $ (B,A) $ for every cut $ (A,B) $ of order~$ < k $, with the additional properties that~$ \tau $ has no subset $ \menge{(A_1,B_1), (A_2,B_2), (A_3,B_3)} $ such that $ B_1\cap B_2\cap B_3=\emptyset $, and that~$ \tau $ contains no cut $ (A,B) $ for which~$ B $ is incident with fewer than~$ k $ edges of~$ G $.

In very much the same way as above we can prove the following theorem:

\begin{THM}\label{thm:edgetangledecider}
	Let $ G=(V,E) $ be a finite (multi-)graph and $ \tau $ a $ k $-edge-tangle in $ G $. Then there exists a function $ w\colon V\to\N $ such that a cut $ (A,B) $ of $ G $ of order $ <k $ lies in $ \tau $ if and only if~$ w(A)<w(B) $.
\end{THM}

We shall prove a more general version of this theorem where we allow $ G $ to be a graph with $ \R_{\ge 0} $\nobreakdash-weighted edges. We consider edges of weight $ 0 $ as indistinguishable from non-edges. Consequently, rather than a graph with weighted edges, we will just consider a pair $ (V, e) $ of a finite set $ V $ together with a symmetric function $ e \colon V^2 \to \R_{\ge 0} $, which we shall call a \emph{pairwise weighting} to distinguish it from the weight function of a decider. The \emph{order} of a bipartion $ (A,B) $ is defined as~$ |(A,B)| \coloneqq \sum_{(u,v)\in A\times B} e(u,v)$.
Note that this function is submodular in the sense that for all bipartitions $ (A,B) $ and $ (C,D) $ we have
\[ |(A,B)| + |(C,D)| \ge |(A \cup C\,,\, B \cap D)| + |(A \cap C\,,\, B \cup D)|\,.  \]

For any positive $ r  $ an \emph{$ r $-profile} in $ (V, e) $ is a set $ \tau $ consisting of exactly one of $ (A, B) $ or $ (B,A) $ for every bipartition $ (A,B) $ of $ V $ of order $ < r $, such that $ \tau $ does not contain $ (V,\emptyset) $ and has no subset of the form~$ \{(A,B), (C,D), (B\cap D, A\cup C)\} $.

Observe that every $ k $-edge-tangle of a (multi-)graph $ G=(V,E) $ is also a $ k $-profile in $ (V,e) $, where $ e $ is the multiplicity of the edges of~$ G $. Therefore the following theorem directly implies~\cref{thm:edgetangledecider}:

\begin{THM}\label{thm:weighting_decider}
	Let $ (V,e) $ be a pairwise weighting and $ \tau $ an $ r $-profile in $ (V,e) $.
	Then there exists a function $ w\colon V\to\N $ such that a bipartition $ (A,B) $ of $ V $ of order $ <r $ lies in $ \tau $ if and only if~$ w(A)<w(B) $.
\end{THM}

The main idea for proving this theorem is to first find an approprate weighting on the edges by the same principles as in \cref{thm:tangledecider} and to then transform it into the weighted vertex decider $w$. So let us first show an analogue of \cref{lem:cross-counting} for pairwise weightings. For this, we define a partial order on the bipartitions of $ V $ as in the previous section: by letting $ (A,B)\le(C,D) $ if and only if $ A\sub C $ (and thus $ B\supseteq D $). Using this partial order we can prove the following analogue of~\cref{lem:cross-counting}:

\begin{LEM}\label{lem:cross-counting_edges}
	For every $ r $-profile $ \tau $ in a pairwise weighting $ (V,e) $ and distinct maximal elements $ (A,B),(C,D) $ of $ \tau $ we have
 	\[ \sum_{(u,v)\in \mathrlap{B^2\,\cap\, (C\times D)}}\quad e(u,v) \;+ \sum_{(u,v)\in \mathrlap{D^2\,\cap\, (A\times B)}}\quad e(u,v) > 	\sum_{(u,v)\in \mathrlap{A^2\,\cap\, (C\times D)}}\quad e(u,v) \; + \sum_{(u,v)\in \mathrlap{C^2\,\cap\, (A\times B)}}\quad e(u,v)\,.\]
\end{LEM}

\begin{proof}
 The bipartition $ (A\cup C,\,B\cap D)$ of $ V $ is strictly larger in the partial order than the maximal elements $ (A,B) $ and $ (C,D) $ and hence cannot lie in $ \tau $. However, by the definition of an $r$-profile, $ \tau $ cannot contain $ (B\cap D,\,A\cup C) $ either. Thus we must have $ \abs{(A\cup C,\,B\cap D)}\ge r $, from which it follows by submodularity that $ \abs{(A\cap C,\,B\cup D)}<r $. Combining these two inequalities, using the definition of order and adding $\sum_{u\in A\cap C}\sum_{v\in B\cap D}e(u,v)$ to both sides proves the claim.
\end{proof}

We are now ready to prove~\cref{thm:weighting_decider}:

\begin{proof}[Proof of Theorem \ref{thm:weighting_decider}]
As in the proof of Theorem~\ref{thm:tangledecider}, it suffices to find a suitable real-valued weight function $ w\colon V\to\R_{\ge 0} $ since $ V $ is finite. We will begin by finding a weight function $\overline{w}\colon V^2\to \R_{\ge 0}$ on the pairs in $ V $ such that we have $\overline{w}(A)\le \overline{w}(B) $ for all $ (A,B)\in \tau$, where $\overline{w}(A) = \sum_{(u,v) \in A^2} \overline{w}(u,v)$, and with this inequality being strict for all but at most one of the maximal elements of $ \tau $. We will subsequently use this $ \overline{w} $ to construct the desired weight function~$w\colon V\to \R_{\ge 0}$.

Enumerate the maximal elements of $\tau$ as $(A_1,B_1),\dots,(A_n,B_n)$. Just as in~\cref{thm:tangledecider} it suffices to find a weight function which decides these maximal elements. For every two maximal elements $(A_i,B_i)$ and $(A_j,B_j)$ let
\[ m_{ij}\coloneqq\sum_{(u,v)\in \mathrlap{B_i^2 \;\cap\; (A_j\times B_j)}} \quad e(u,v) - \sum_{(u,v)\in \mathrlap{A_i^2 \,\cap\, (A_j\times B_j)}}\quad e(u,v)\,. \]
Let $ M $ be the matrix $ \{m_{ij}\}_{i,j\le n} $. Observe that, by Lemma~\ref{lem:cross-counting_edges}, $M + M^T$ has positive entries everywhere but on the diagonal, where it is zero. We are now in the same situation as in the proof of Theorem~\ref{thm:tangledecider} and can find some vector $x\in\R_{\ge 0}^n$ such that either $(Mx)_i>0$ on all $i$, or $x$ has exactly one non-zero entry, say $x_i$, and $(Mx)_j>0$ for all $j\neq i$.

In either case, given a pair of vertices $(u,v)$ let 
\[\overline{w}(u,v)\coloneqq e(u,v)\left(\sum_{j\colon (u,v) \in A_j\times B_j }\!x_j+\sum_{j\colon (u,v)\in B_j\times A_j} \!x_j\right) = \sum_{\overset{j:}{(u,v) \in \mathrlap{(A_j\times B_j )\cup (B_j\times A_j)}}} x_j \cdot e(u,v)\,.\]
Note that $\overline{w}$ is symmetric. For the same reason as in~\cref{thm:tangledecider}, by choice of $ x $, this function $ \overline{w} $ decides all but at most one of the $ (A_i,B_i) $ correctly in the sense that $ \overline{w}(A_i)\le\overline{w}(B_i) $ for all $ i=1,\dots,n $ with at most one inequality not being strict.

It remains to turn $ \overline{w} $ into a weight function on $ V $ rather than on $ V^2 $, and to verify that it has the desired properties. Define $w\colon V\to\R_{\ge 0}$ as
\[ w(v) \coloneqq \sum_{u\in V} \overline{w}(u,v)\,. \]
Then for each $ i=1,\dots,n $ we find that
\[\begin{split}
w(B_i) - w(A_i) 
 &= \sum_{u \in B_i}\sum_{v\in V} \overline{w}(u,v) - \sum_{u\in A_i}\sum_{v\in V} \overline{w}(u,v) \\
 &= \sum_{(u,v) \in B_i^2} \overline{w}(u,v) - \sum_{(u,v) \in A_i^2} \overline{w}(u,v) \\
 &= \sum_{(u,v)\in B_i^2} \sum_{\overset{j:}{(u,v) \in \mathrlap{(A_j\times B_j)\cup(B_j\times A_j)}}} x_j \cdot e(u,v) \qquad - \sum_{(u,v) \in A_i^2} \sum_{\overset{j:}{(u,v) \in \mathrlap{(A_j\times B_j )\cup(B_j\times A_j)}}} x_j \cdot e(u,v) \\
&= 2\sum_{j=1}^n \left( \sum_{(u,v)\in\mathrlap{B_i^2 \cap (A_j\times B_j)}} x_j \cdot e(u,v) \; - \sum_{(u,v) \in \mathrlap{A_i^2 \cap (A_j\times B_j)}} x_j \cdot e(u,v) \; \right) \\
&=2(Mx)_i.
\end{split}\]
Thus either $w(B_i)>w(A_i)$ for all maximal elements of $\tau$, from which the claim follows directly, or there is a single maximal element $(A_i,B_i)$ of $\tau$ such that $w(B_i)=w(A_i)$ and $w(B_j)>w(A_j)$ for all others. However, as in the proof of Theorem \ref{thm:tangledecider}, in the latter case we can pick an arbitrary vertex $v\in B_i$ and increase $w(v)$ by some small $\varepsilon>0$ to achieve $w(B_i)>w(A_i)$ while keeping $w(B_j)>w(A_j)$ for all other maximal elements of $\tau$.
\end{proof}

Remarkably, and in contrast to~\cref{thm:tangledecider},~\cref{thm:edgetangledecider} does not in fact extend to hypergraphs.
To see this, let us recall the relevant definitions, which extend naturally to hypergraphs.

A hypergraph $H=(V,E)$ consists of a vertex set $V$ together with a set $E\subseteq 2^V$ of hyperedges. An \emph{(edge) cut} of $ H $ is a bipartition $ (A,B) $ of~$ V $ and the \emph{order} of such an edge cut $ (A,B) $ is the number of hyperedges of~$ H $ that are incident with vertices from both~$ A $ and~$ B $.

For an integer~$ k $, a \emph{$ k $-edge-tangle} of~$ H $ is a set~$ \tau $ consisting of exactly one $ (A,B) $ or $ (B,A) $ for every cut $ (A,B) $ of order~$ < k $, with the additional properties that~$ \tau $ has no subset $ \menge{(A_1,B_1), (A_2,B_2), (A_3,B_3)} $ such that $ B_1\cap B_2\cap B_3=\emptyset $, and that~$ \tau $ contains no cut $ (A,B) $ for which~$ B $ is incident with fewer than~$ k $ hyperedges of~$ H $.

A weighted decider for some~$ k $-edge-tangle~$ \tau $ of a hypergraph $ H=(V,E) $ then is a function $ w\colon V\to\N $ such that a cut $ (A,B) $ of~$ H $ of order $ <k $ lies in~$ \tau $ if and only if~$ w(A)<w(B) $.

\cref{thm:edgetangledecider} thus asserts that if~$ H $ is just a (multi-)graph, i.e., if every hyperedge in~$ E $ has size~$ 2 $, then every~$ k $-edge-tangle of~$ H $ has such a weighted decider. We are now going to construct an example demonstrating that this may fail for hypergraphs~$ H $ with hyperedges of size $\ge 3$.

\begin{EX}\label{ex:construction}
	For some natural number $ k\ge 6 $ let $ \ell $ be an integer with~$ 3\le \ell\le\frac{k}{2} $. Let~$ V $ be the set of all $ \ell $-element subsets of~$ [k]=\menge{1,\dots,k} $. Let the set $ E $ of hyperedges consist of, for each $ i\in[k] $, the set of all $ v\in V $ that contain~$ i $.
	Note that each of these~$ k $ many hyperedges of~$ H $ has size $ \binom{k-1}{\ell-1} $, making~$ H $ a uniform $ \ell $-regular hypergraph.
\end{EX}


\begin{THM}\label{thm:edgeexampel}
	Let $H$ be as in \cref{ex:construction}. Then~$ H $ has a~$ k $-edge-tangle with no weighted decider.
\end{THM}

\begin{proof}
	Let $ S_k $ denote the set of all cuts of~$ H $ of order~$ <k $. For a set $ A\sub V $ we write $ \cup A $ for the set $ \bigcup_{v\in A}v $, which is a subset of~$ [k] $. Observe that for every cut $ (A,B) $ of~$ H $ at most one of $ \cup A $ and $ \cup B $ can be a proper subset of~$ [k] $. Note further that a cut $ (A,B) $ of~$ H $ lies in $ S_k $ if and only if at least one of the~$ k $ hyperedges of~$ H $ does not meet both~$ A $ and~$ B $, which is the case precisely if one of $ \cup A $ and $ \cup B $ is a proper subset of~$ [k] $.
	
	We can therefore define
	\[ \tau\coloneqq\menge{(A,B)\in S_k\mid \cup A\subsetneq[k]}\,. \]
	Let us show that~$ \tau $ is a~$ k $-edge-tangle of~$ H $ with no weighted decider.
	
	To see that~$ \tau $ is a~$ k $-edge-tangle we note that by the above observation~$ \tau $ contains exactly one of $ (A,B) $ or $ (B,A) $ for every cut~$ (A,B)\in S_k $. Furthermore if $ (A_1,B_1),(A_2,B_2),(A_3,B_3)\in\tau $ then any element of~$ V $ containing at least one point each from $ [k]\sm\cup A_1 $, from $ [k]\sm\cup A_2 $, and from $ [k]\sm\cup A_3 $ lies in $ B_1\cap B_2\cap B_3 $, which is hence non-empty since such a $ v\in V $ exists by~$ \ell\ge 3 $. Finally for each $ (A,B)\in\tau $ the set~$ B $ is incident with each hyperedge of~$ H $ since~$ \cup B=[k] $. Thus~$ \tau $ is indeed a~$ k $-edge-tangle.
	
	Finally, let us show that~$ \tau $ has no weighted decider. Suppose for a contradiction that some weighted decider $ w\colon V\to\N $ for~$ \tau $ exists. For each $ i\in[k] $ consider the cut $ (A_i,B_i) $, where
	\[ A_i\coloneqq\menge{v\in V\mid i\notin v}\qquad\textnormal{ and }\qquad B_i\coloneqq\menge{v\in V\mid i\in v}\,, \]
	and note that $ (A_i,B_i)\in\tau $. Since $ w $ is a weighted decider for~$ \tau $ we have $ w(B_i)>w(A_i) $ for each $ i\in[k] $. We therefore have
	\[ \sum_{i\in[k]}(w(B_i)-w(A_i))>0\,, \]
	since each term in the sum is positive. By counting the instances of $ w(v) $ occurring in the sum for each $ v\in V $ we find that 
	\[ \sum_{i\in[k]}(w(B_i)-w(A_i))=\sum_{v\in V}w(v)\cdot\braces{\abs{\menge{i\in[k]\mid i\in v}}-\abs{\menge{i\in[k]\mid i\notin v}}}\,, \]
	since $ v\in B_i $ if and only if $ i\in v $, and otherwise~$ v\in A_i $. The left-hand side of this equation is positive. However, in contradiction to this, no term of the right-hand sum is greater than zero since we have by $ \ell\le\frac{k}{2} $ that
	\[ \abs{\menge{i\in[k]\mid i\in v}}-\abs{\menge{i\in[k]\mid i\notin v}}=\ell-(k-\ell)\le 0\,. \]
	Therefore there can be no weighted decider for~$ \tau $.
\end{proof}

A construction analogous to~\cref{ex:construction} was found independently by Geelen~\cite{GeelenExample} in the setting of matroids, who used it to show that matroids, too, can have tangles with no weighted decider.

Finally, let us remark that~\cref{ex:construction} can also be used to show that allowing weighted deciders to take values in $ \mathbb{R} $ rather than $ \N $ does not suffice to guarantee their existence for edge-tangles of hypergraphs: for $ k=2\ell $~the tangle described in~\cref{thm:edgeexampel} has no weighted decider with real-valued and possibly negative weights either, with the same proof.


\begin{bibdiv}
\begin{biblist}[\normalsize]

\bib{ProfilesNew}{article}{   
   author={Diestel, Reinhard},
   author={Hundertmark, Fabian},
   author={Lemanczyk, Sahar},
   title={Profiles of separations: in graphs, matroids, and beyond},
   journal={Combinatorica},
   volume={39},
   date={2019},
   number={1},
   pages={37--75},
   issn={0209-9683},
   doi={10.1007/s00493-017-3595-y},
}

\bib{diestel_tangle-tree_2017}{article}{
   author={Diestel, Reinhard},
   author={Oum, Sang{-}il},
   title={Tangle-tree duality: in graphs, matroids and beyond},
   journal={Combinatorica},
   volume={39},
   date={2019},
   number={4},
   pages={879--910},
   issn={0209-9683},
   doi={10.1007/s00493-019-3798-5},
}

\bib{ChristiansMasterarbeit}{thesis}{
    author={Elbracht, {Chr}istian},
       title={Tangles determined by majority vote},
        type={Master's Thesis},
        date={2017},
}

\bib{GeelenExample}{misc}{
      author={Geelen, Jim},
       title={Personal communication},
     address={Oberwolfach},
        date={2016},
}

\bib{liu_packing_2015}{article}{
      author={Liu, Chun-Hung},
       title={Packing and {Covering} {Immersions} in 4-{Edge}-{Connected}
  {Graphs}},
        date={2015-05},
         url={http://arxiv.org/abs/1505.00867},
        eprint={1505.00867},
}

\bib{GMX}{article}{
   author={Robertson, Neil},
   author={Seymour, Paul D.},
   title={Graph minors. X. Obstructions to tree-decomposition},
   journal={J. Combin. Theory Ser. B},
   volume={52},
   date={1991},
   number={2},
   pages={153--190},
   issn={0095-8956},
   doi={10.1016/0095-8956(91)90061-N},
}

\bib{TuckerDuality}{article}{
      author={Tucker, Albert William},
       title={Dual systems of homogeneous linear relations},
       book = {
       	date={1957},
   		title={Linear inequalities and related systems},
      publisher={University Press, Princeton, N. J.},
      series={Ann.\ Math.\ Stud.},
      volume={38},
      editor={Kuhn, Harold William},
      editor={Tucker, Albert William},
       },
       pages={1--18},
       doi={10.1515/9781400881987-002},
}

\end{biblist}
\end{bibdiv}


\begin{aicauthors}
\begin{authorinfo}[ce]
  Christian Elbracht\\
  Universit\"at Hamburg\\
  Hamburg, Germany\\
  christian.elbracht\imageat{}uni-hamburg\imagedot{}de
\end{authorinfo}
\begin{authorinfo}[jk]
  Jay Lilian Kneip\\
  Universit\"at Hamburg\\
  Hamburg, Germany\\
  jkneip.math.uhh\imageat{}gmail\imagedot{}com
\end{authorinfo}
\begin{authorinfo}[mt]
  Maximilian Teegen\\
  Universit\"at Hamburg\\
  Hamburg, Germany\\
  maximilian.teegen\imageat{}uni-hamburg\imagedot{}de
\end{authorinfo}
\end{aicauthors}

\end{document}